\documentclass[12pt,a4paper]{article}
\usepackage{authblk}
\usepackage{amsmath}
\usepackage{amsfonts}
\usepackage{amssymb,float}
\usepackage{latexsym}
\usepackage[latin1]{inputenc}
\usepackage{amssymb,float}
\usepackage{latexsym}
\usepackage{epstopdf}
\usepackage{geometry}
\usepackage[active]{srcltx}
\usepackage{graphicx}
\usepackage{epstopdf}
\usepackage{wrapfig}
\usepackage{caption}
\usepackage[T1]{fontenc}
\usepackage{subcaption}
\usepackage{enumerate}
\usepackage[ bookmarks=true,         bookmarksnumbered=true, colorlinks=true,
pdfstartview=FitV, linkcolor=blue, citecolor=blue, urlcolor=blue]{hyperref}
\usepackage{amssymb}%
\providecommand{\U}[1]{\protect\rule{.1in}{.1in}}
\newtheorem{theorem}{Theorem}[section]
\newtheorem{proposition}{Proposition}[section]

\newtheorem{corollary}{Corollary}[section]

\newenvironment{proof}[0]{\paragraph{Proof.}}{\rule{0.5em}{0.5em}}

\begin{document}
\title{\bf Property Of The Beta Modified Weibull Distribution With Six Parameters}
\author[1]{Didier Alain Njamen Njomen\thanks{didiernjamen1@gmail.com}}
\author[1,2]{Fidel Djongreba Ndikwa\thanks{fidel.djong@yahoo.fr}}
\affil[1]{Department of Mathematics and Computer Science, University of Maroua, Maroua, Cameroon}
\affil[2]{Department of Operational Research, Higher Institute of Transport Logistics and Commerce, University of Ebolowa, Ebolowa, Cameroon}

\maketitle

\begin{abstract}
The aim of this article is to determine a new six-parameter
Beta Weibull distribution and its various associated functions, 
namely the cumulative distribution, survival, probability density 
and hazard functions. Next, we determine the
sub-distributions of the new distribution and show that the
latter generalizes those of the literature. Finally, numerical
simulations were performed and show that the shapes of the
density function of the new distribution cover all those in the literature, and the shapes of hazard functions (constant, increasing, decreasing, $\bigcup$-shaped and $\bigcap$-shaped) are represented in the new distribution and encompass all existing distributions.
\end{abstract}

\textbf{Keywords:} Incomplete Beta, Weibull, Distribution, Beta modified Weibull distribution, Density
function, Hazard function.

\textit{Mathematical Subject Classification} (2000): 62Gxx, 62Nxx.

\section{Introduction}
\label{intro}

Non-parametric estimation of the density and risk function has been
extensively studied in the literature in several forms: circular kernel \cite{Njomen}, noise distribution in CHARN models \cite{Ngatchou}. This estimation was used in the Weibull distribution \cite{Weibull}, which is one of the main families used to model phenomena in biology and reliability engineering. However, one of the limitations of this distribution is that the hazard functions generated from it are either increasing or strictly decreasing monotonic. To obtain different forms of hazard functions, it is possible to extend the Weibull distribution by adding parameters, which allows for more flexible forms because, as \cite{Khan} points out, adding a parameter to an existing distribution makes it more flexible. Thus, the Generalized Beta approach was introduced to solve this type of problem. We cite, for example, the Beta Weibull distributions, Beta Exponentiel, Beta Gumbel, Beta
Normal...(see \cite{Eugene}, \cite{Famoye}, \cite{Gupta1}, \cite{Khan}, \cite{Lai}, \cite{Lee}, \cite{Mudholkar1}, \cite{Mudholkar2}, \cite{Mudholkar}, \cite{Nadarajah2}, \cite{Nadarajah3}, \cite{Nadarajah1} and \cite{Djongreba}).

A review of the literature shows that several forms of the modified Weibull Beta distribution exist, such as those given by \cite{Famoye}, \cite{Wahed}, \cite{Silva}, \cite{Nadarajah2}, \cite{Khan} and \cite{Lai}. However, one aspect of the Weibull distribution (see \cite{Jeong}) has not yet been used. This article is written in this context.

The aim of this article is to use the \cite{Jeong} distribution to obtain a new six-parameter Beta Weibull distribution, generalizing all existing Beta Weibull distributions.

The method used in this article is from \cite{Silva} as are the references cited therein.

The rest of this article is structured as follows: In Section 2, we estimate the modified Beta Weibull distribution and in Section 3, we give the main results relating to the new six-parameter distribution. Next in Section 4, numerical simulations of the density and hazard functions of the new distribution are carried out in order to compare it with those in the literature. Finally, in Section 5, we give some special cases which illustrate the sub-distributions of the new distribution and which generalize all those in the literature.

\section{Estimation of the beta modified Weibull distribution}
\label{definition}

It is clearly demonstrated in \cite{Wahed} that, as $\tau$ approaches zero, the
four-parameter Weibull distribution reduces to the generalized
Weibull distribution of [12] and covers a greater number of shapes
than the three-parameter extension of the Weibull model and is
regular, regardless of the parameter values. This proves the flexibility of the four-parameter Weibull model, allowing us to study its
extensions, hence the interest in studying new models.

Let $X$ be a non-negative random variable, $\Omega$ the parameter
space, and $x$ the realization of $X$. Let $G(x)$ be the cumulative distribution
function of the variable $X$. We assume that X follows the modified
Weibull distribution with parameters $(\gamma,\beta,\lambda,\tau)
\in \Omega$ defined in \cite{Jeong} and $G(X)$ follows the Beta distribution with parameters $(a,b) \in \Omega$.
The survival and hazard functions of the four-parameter Weibull
distribution defined in \cite{Jeong} are given respectively by the following
relations \eqref{dnf0} et \eqref{dnf1}:

\begin{eqnarray}
\label{dnf0} S(x;\gamma,\beta,\lambda,\tau)=
\exp\left[-\dfrac{\lambda^{1-\tau} \left( (\frac{x}{\beta})^{\gamma}
+ \lambda \right)^{\tau} - \lambda }{\tau} \right],
\end{eqnarray}
with
$$\quad\quad 0<x<\infty , 0<\gamma,\beta,\lambda<\infty , -\infty<\tau<\infty ,$$
and
\begin{eqnarray}
\label{dnf1} h(x;\gamma,\beta,\lambda,\tau)=
\dfrac{\gamma(\frac{x}{\beta})^{\gamma} \lambda^{1-\tau} }
{x\left[(\frac{x}{\beta})^{\gamma} + \lambda \right]^{1-\tau}}
.\end{eqnarray}

In this article, we consider $G(x)$ to be the cumulative distribution
function of the modified Weibull distribution and the Beta
distribution with two parameters $a$ and $b$ such that:
$$B_{1}(a,b)=\int_{0}^{1}x^{a-1}(1-x)^{b-1}dx, \quad a,b>0.$$
A variable $X(0\leq x \leq \infty)$ is a variable $\beta(a,b)$ if its elementary
cumulative probability distribution is given by :
$$f(x,a,b)=\frac{x^{a-1}(1-x)^{b-1}}{B(a,b)},$$
and its cumulative distribution function is defined by :

\begin{eqnarray*}
F(x,a,b) = \frac{1}{B(a,b)}\int_{0}^{x} u^{a-1}(1-u)^{b-1}du.
\end{eqnarray*}
By setting $B_{x}(a,b)=\int_{0}^{x} u^{a-1}(1-u)^{b-1}du$, we get:
\begin{eqnarray*}
F(x,a,b) = \frac{B_{x}(a,b)}{B(a,b)}.
\end{eqnarray*}
The expression $B_{x}(a,b)$ is called the incomplete beta function.

Assuming that $G(X)$ follows the Beta distribution with parameters $(a,b) \in \Omega$, then

\begin{eqnarray}
\label{dnf00}F(G(x),a,b) &=& \frac{B_{G(x)}(a,b)}{B(a,b)} \nonumber\\
&=&\frac{1}{B(a,b)}\int_{0}^{G(x)} u^{a-1}(1-u)^{b-1}du
\end{eqnarray}

\section{Main Results}

To define the new distribution, we will need the cumulative distribution
and density functions of the four-parameter Weibull distribution
from \cite{Jeong}.

\begin{proposition}
\label{prop1} 
The cumulative distribution function and probability density function associated with the four-parameter Weibull
distribution are given respectively by :
\begin{eqnarray}
\label{dnf3} G(x;\gamma,\beta,\lambda,\tau)=1-
\exp\left[-\dfrac{\lambda^{1-\tau} \left( (\frac{x}{\beta})^{\gamma}
+ \lambda \right)^{\tau} - \lambda }{\tau} \right].
\end{eqnarray}
and
\begin{eqnarray}
\label{dnf4} g(x;\gamma,\beta,\lambda,\tau)=
\dfrac{\frac{\gamma}{\beta}(\frac{x}{\beta})^{\gamma-1}
 \lambda^{1-\tau}}
{\left[(\frac{x}{\beta})^{\gamma} + \lambda \right]^{1-\tau}}
\exp\left[-\dfrac{\lambda^{1-\tau} \left( (\frac{x}{\beta})^{\gamma}
+ \lambda \right)^{\tau} - \lambda }{\tau} \right].
\end{eqnarray}

\end{proposition}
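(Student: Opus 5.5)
The plan is to derive both formulas directly from the survival function \eqref{dnf0} and the hazard function \eqref{dnf1} given in \cite{Jeong}, using the standard identities $G = 1-S$ and $g = h\,S$ (equivalently $g = G' = -S'$).

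\emph{Step 1: the cumulative distribution function.} For a non-negative continuous random variable the survival function is $S(x)=P(X>x)=1-G(x)$. Substituting \eqref{dnf0} gives \eqref{dnf3} at once. As a sanity check that $G$ is a genuine distribution function, I will verify three facts.
\begin{itemize}
\item At $x\to0^+$ the exponent tends to $-(\lambda^{1-\tau}\lambda^{\tau}-\lambda)/\tau=0$, so $G(0^+)=0$.
\item The map $u\mapsto (u+\lambda)^\tau/\tau$ is increasing for every $\tau\neq0$, so $G$ is nondecreasing.
\item As $x\to\infty$ the exponent tends to $-\infty$ when $\tau>0$.
\end{itemize}
The case $\tau\le 0$ needs care: when $\tau<0$ the exponent tends to a finite limit, so the law is defective. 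I will simply record the formula as given in \cite{Jeong} and interpret $\tau=0$ as the limit $\lambda\log\bigl(1+(x/\beta)^\gamma/\lambda\bigr)$.

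\emph{Step 2: the density.} Set $u(x)=(x/\beta)^\gamma$, so that $u'(x)=\frac{\gamma}{\beta}(x/\beta)^{\gamma-1}$. Put
$$H(x)=\frac{\lambda^{1-\tau}(u+\lambda)^\tau-\lambda}{\tau},$$
so that $G=1-e^{-H}$. By the chain rule,
$$H'(x)=\lambda^{1-\tau}(u+\lambda)^{\tau-1}u'(x)=\frac{\frac{\gamma}{\beta}(x/\beta)^{\gamma-1}\lambda^{1-\tau}}{\bigl[(x/\beta)^\gamma+\lambda\bigr]^{1-\tau}}.$$
Then $g=G'=H'e^{-H}$, which is exactly \eqref{dnf4}. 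As a consistency check, I will note that $\frac{\gamma}{\beta}(x/\beta)^{\gamma-1}=\frac{\gamma}{x}(x/\beta)^\gamma$. With this rewriting $H'$ coincides with \eqref{dnf1}, confirming $g=h\,S$.

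There is no real obstacle. The only delicate point is the differentiation of $(u+\lambda)^\tau/\tau$, where the factor $\tau$ cancels. One should also remark on the degenerate value $\tau=0$, which is handled by the limiting form of $H$; in that case the density formula still holds by continuity in $\tau$.
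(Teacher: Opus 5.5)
Your argument is correct and is essentially the paper's. The paper also gets \eqref{dnf3} from $G=1-S$, but it obtains \eqref{dnf4} from $g=hS$ using Jeong's hazard \eqref{dnf1}. You instead differentiate $G=1-e^{-H}$ and then recover $g=hS$ via $\frac{\gamma}{\beta}(x/\beta)^{\gamma-1}=\frac{\gamma}{x}(x/\beta)^{\gamma}$, a rewriting the paper uses without comment.

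Your route also checks that \eqref{dnf0} and \eqref{dnf1} are consistent with each other. Your side remarks go beyond the paper, which ignores both points despite allowing $-\infty<\tau<\infty$: $G(\infty)=1-e^{\lambda/\tau}<1$ for $\tau<0$, so the law is defective, and $\tau=0$ must be read as the limit $\lambda\log\bigl(1+(x/\beta)^\gamma/\lambda\bigr)$.
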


\begin{proof}
By definition, the cumulative distribution function is
given by :
$G(x;\gamma,\beta,\lambda,\tau)=1-S(x;\gamma,\beta,\lambda,\tau)$.
Therefore according to \eqref{dnf0}, we have:

\begin{eqnarray*}
G(x;\gamma,\beta,\lambda,\tau)=1-
\exp\left[-\dfrac{\lambda^{1-\tau} \left( (\frac{x}{\beta})^{\gamma}
+ \lambda \right)^{\tau} - \lambda }{\tau} \right].
\end{eqnarray*}

On the other hand, the probability density function is given by $g(x)=h(x)S(x)$. According to \eqref{dnf1}, we obtain:

\begin{eqnarray*}
g(x;\gamma,\beta,\lambda,\tau)=
\dfrac{\frac{\gamma}{\beta}(\frac{x}{\beta})^{\gamma-1}
 \lambda^{1-\tau}}
{\left[(\frac{x}{\beta})^{\gamma} + \lambda \right]^{1-\tau}}
\exp\left[-\dfrac{\lambda^{1-\tau} \left( (\frac{x}{\beta})^{\gamma}
+ \lambda \right)^{\tau} - \lambda }{\tau} \right].
\end{eqnarray*} \end{proof}

A review of the literature shows that \cite{Jeong} determined the 4-
parameter survival and risk functions; \cite{Silva} determined the 5-parameter Beta modified Weibull distribution.

In this article, we determine the six-parameter Beta modified Weibull distribution. To achieve this, we use the expression \eqref{dnf00} above, which allows us to add two new parameters to an existing distribution. Indeed, by replacing the expression \eqref{dnf3} in \eqref{dnf00}, we obtain the following result which gives the cumulative distribution and survival functions with 6 parameters respectively:

\begin{theorem}
\label{prop2} The cumulative distribution function and sur-
vival function associated with the six-parameter Beta Weibull dis-
tribution are given respectively by :
\begin{eqnarray}
\label{dnf6} F(x;a,b,\gamma,\beta,\lambda,\tau) =
\frac{1}{B(a,b)}\int_{0}^{1- \exp\left[-\dfrac{\lambda^{1-\tau}
\left( (\frac{x}{\beta})^{\gamma} + \lambda \right)^{\tau} - \lambda
}{\tau} \right]} u^{a-1}(1-u)^{b-1}du,
\end{eqnarray}
and
\begin{eqnarray}
\label{dnf7} S(x;a,b,\gamma,\beta,\lambda,\tau) = 1-
\frac{1}{B(a,b)}\int_{0}^{1- \exp\left[-\dfrac{\lambda^{1-\tau}
\left( (\frac{x}{\beta})^{\gamma} + \lambda \right)^{\tau} - \lambda
}{\tau} \right]} u^{a-1}(1-u)^{b-1}du
\end{eqnarray}
with $a>0$ and $b>0$, $0<x<\infty , 0<\gamma,\beta,\lambda<\infty ,
-\infty<\tau<\infty.$
\end{theorem}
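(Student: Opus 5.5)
The proof is a direct substitution into the Beta-generator construction, followed by a check that the result is a genuine distribution.

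\textbf{Step 1: the distribution function.} The six-parameter model is defined by composing the Beta cumulative distribution function with the baseline $G$. That is, $F(x;a,b,\gamma,\beta,\lambda,\tau)=F(G(x),a,b)$ as in \eqref{dnf00}. By Proposition \ref{prop1}, $G(x;\gamma,\beta,\lambda,\tau)$ is given by \eqref{dnf3}. Replacing the upper limit $G(x)$ of the incomplete beta integral $B_{G(x)}(a,b)$ by this expression gives \eqref{dnf6} immediately.

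\textbf{Step 2: the survival function.} By definition $S=1-F$, which yields \eqref{dnf7}. One can also rewrite it as $\frac{1}{B(a,b)}\int_{G(x)}^{1}u^{a-1}(1-u)^{b-1}\,du$, using $B_1(a,b)=B(a,b)$.

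\textbf{Step 3: validity checks.} To make the statement meaningful as a distribution on $(0,\infty)$, I will verify the following.

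First, the upper limit $G(x)$ lies in $[0,1]$. This holds because the exponent $E(x)=\big(\lambda^{1-\tau}((x/\beta)^\gamma+\lambda)^\tau-\lambda\big)/\tau$ is nonnegative. For $\tau>0$, we have $((x/\beta)^\gamma+\lambda)^\tau\ge\lambda^\tau$. For $\tau<0$, both the numerator and $\tau$ are nonpositive. For $\tau=0$, the formula is read as the limit, $E(x)=\lambda\log(1+(x/\beta)^\gamma/\lambda)\ge 0$.

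Second, $E$ is nondecreasing in $x$. Its derivative is $\lambda^{1-\tau}\gamma(x/\beta)^{\gamma-1}\beta^{-1}((x/\beta)^\gamma+\lambda)^{\tau-1}>0$, which is exactly the hazard \eqref{dnf1} again. Hence $G$ is nondecreasing with $G(0^+)=0$.

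Third, the limit at infinity. Since $u\mapsto\frac{1}{B(a,b)}\int_0^u t^{a-1}(1-t)^{b-1}dt$ is an increasing continuous map from $[0,1]$ onto $[0,1]$, the composition $F$ is a continuous nondecreasing function with $F(0^+)=0$. It remains to check $F(\infty)=1$, which requires $G(x)\to1$, i.e.\ $E(x)\to\infty$. This is where I expect the main obstacle. For $\tau\ge0$ the divergence holds, since $E$ grows like $x^{\gamma\tau}$ or $\log x$. For $\tau<0$, however, $E(x)\to\lambda/|\tau|<\infty$, so $G(\infty)<1$ and the baseline is improper (defective). I would handle this by noting it explicitly: the formulas \eqref{dnf6}--\eqref{dnf7} hold as stated for all $\tau$, but they define a proper distribution only when $\tau\ge0$, or else after allowing a mass at infinity. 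I would mention this caveat rather than alter the statement.
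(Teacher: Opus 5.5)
Your Steps 1 and 2 are the paper's proof: it substitutes the baseline cdf \eqref{dnf3} for the upper limit $G(x)$ of the incomplete beta ratio in \eqref{dnf00}, and then sets $S=1-F$. Your Step 3 goes beyond the paper, which never checks that $F$ is a proper distribution function, and your caveat is correct: for $\tau<0$ one gets $G(\infty)=1-e^{-\lambda/|\tau|}<1$, so the stated range $-\infty<\tau<\infty$ yields a proper distribution only for $\tau\ge 0$ (with $\tau=0$ read as the limiting exponent $\lambda\log(1+(x/\beta)^\gamma/\lambda)$).
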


\begin{proof}
From \eqref{dnf00}, we have for all $a>0$ and $b>0$
\begin{eqnarray*}
 F(G(x), a, b)
&=& I_{G(x)}(a,b)\\
&=& \frac{1}{B(a,b)}\int_{0}^{G(x)} u^{a-1}(1-u)^{b-1}du,
\end{eqnarray*}
where $I_{y}(a,b)= \frac{B_{y}(a,b)}{B(a,b)}$ is the incomplete beta ratio function and  $B_{y}(a,b)=\int_{0}^{y}
u^{a-1}(1-u)^{b-1}du$ the incomplete beta function.\\
Considering $G$ to be the cumulative distribution function of the four-parameter Weibull distribution, we obtain:
\begin{eqnarray*}
F(G(x), a, b)
&=& \frac{1}{B(a,b)}\int_{0}^{G(x)} u^{a-1}(1-u)^{b-1}du\\
&=& \frac{1}{B(a,b)}\int_{0}^{1-
\exp\left[-\dfrac{\lambda^{1-\tau} \left( (\frac{x}{\beta})^{\gamma}
+ \lambda \right)^{\tau} - \lambda }{\tau} \right]} u^{a-1}(1-u)^{b-1}du.
\end{eqnarray*}

From the expression \eqref{dnf6}, we define the associated survival function
which is given by $S(G(x), a, b)=1-F(G(x), a, b)$.\\
Therefore
\begin{eqnarray*}
S(G(x), a, b)
&=& 1-F(G(x), a, b)\\
&=& 1-\frac{1}{B(a,b)}\int_{0}^{1-
\exp\left[-\dfrac{\lambda^{1-\tau} \left( (\frac{x}{\beta})^{\gamma}
+ \lambda \right)^{\tau} - \lambda }{\tau} \right]} u^{a-1}(1-u)^{b-1}du.
\end{eqnarray*}
\end{proof}

The following result gives the 6-parameter probability density
of the new distribution:

\begin{theorem}
\label{theo1} 
The 6-parameter probability density function associated with the Beta modied Weibull distribution is given by :

\begin{eqnarray}
\label{dnf9}
 f(x;a,b,\gamma,\beta,\lambda,\tau)&=&\frac{\gamma}{\beta B(a,b)} \dfrac{(\frac{x}{\beta})^{\gamma-1}
 \lambda^{1-\tau}}
{\left[(\frac{x}{\beta})^{\gamma} + \lambda \right]^{1-\tau}}
\exp\left[-\dfrac{b\lambda^{1-\tau} \left(
(\frac{x}{\beta})^{\gamma}
+ \lambda \right)^{\tau} - \lambda }{\tau} \right]\nonumber\\
&\times& \left(1- \exp\left[-\dfrac{\lambda^{1-\tau} \left(
(\frac{x}{\beta})^{\gamma} + \lambda \right)^{\tau} - \lambda
}{\tau} \right] \right)^{a-1} \nonumber\\
\end{eqnarray}

\end{theorem}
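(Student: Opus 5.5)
The plan is to differentiate the cumulative distribution function \eqref{dnf6} of Theorem \ref{prop2} with respect to $x$, using the fundamental theorem of calculus together with the chain rule. Write
$$A(x)=\frac{\lambda^{1-\tau}\left((\frac{x}{\beta})^{\gamma}+\lambda\right)^{\tau}-\lambda}{\tau},$$
so that $G(x)=1-e^{-A(x)}$ by Proposition \ref{prop1}. Then \eqref{dnf6} reads $F(x)=\frac{1}{B(a,b)}\int_0^{G(x)}u^{a-1}(1-u)^{b-1}\,du$.

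First I need that the integrand is continuous at $u=G(x)$ and that $G$ is differentiable. For $x>0$ the base $(\frac{x}{\beta})^{\gamma}+\lambda$ exceeds $\lambda>0$. Hence $A(x)>0$ for every $\tau\neq0$: if $\tau>0$ then $\lambda^{1-\tau}(\cdot)^\tau>\lambda$, and if $\tau<0$ the inequality reverses but so does the sign of $\tau$. It follows that $0<G(x)<1$, so $u\mapsto u^{a-1}(1-u)^{b-1}$ is continuous at $u=G(x)$ even when $a<1$ or $b<1$. Moreover $G$ is $C^1$ on $(0,\infty)$, being a composition of smooth maps.

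Then
$$f(x)=F'(x)=\frac{1}{B(a,b)}\,G(x)^{a-1}\bigl(1-G(x)\bigr)^{b-1}g(x),$$
where $g=G'$ is given by \eqref{dnf4} in Proposition \ref{prop1}. As a check, $A'(x)=\lambda^{1-\tau}\left((\frac{x}{\beta})^{\gamma}+\lambda\right)^{\tau-1}\frac{\gamma}{\beta}(\frac{x}{\beta})^{\gamma-1}$, and $g=A'e^{-A}$.

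Next I substitute. Since $1-G(x)=e^{-A(x)}$, the factors combine as
$$g(x)\bigl(1-G(x)\bigr)^{b-1}=A'(x)\,e^{-A(x)}e^{-(b-1)A(x)}=A'(x)\,e^{-bA(x)}.$$
Collecting the constant $\frac{\gamma}{\beta B(a,b)}$ in front, and writing $G^{a-1}=(1-e^{-A})^{a-1}$, gives exactly the three factors displayed in \eqref{dnf9}.

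The only point requiring care is bookkeeping in the exponent. The computation yields $\exp[-b\,A(x)]$, i.e. $b$ multiplies the whole quotient $\frac{\lambda^{1-\tau}(\cdots)^{\tau}-\lambda}{\tau}$. The display \eqref{dnf9} should therefore be read as $\exp\bigl[-\frac{b(\lambda^{1-\tau}((\frac{x}{\beta})^{\gamma}+\lambda)^{\tau}-\lambda)}{\tau}\bigr]$, and I would state this reading explicitly in the proof.

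As a final check that $f$ is a genuine density, substitute $u=G(x)$. As $x$ runs over $(0,\infty)$, $u$ runs monotonically from $0$ to $1$ when $\tau\ge0$, because then $A(x)\to\infty$, so $\int_0^\infty f=B(a,b)/B(a,b)=1$. (For $\tau<0$, $A$ stays bounded, so the upper limit of $u$ is $1-e^{-\lambda/(-\tau)}<1$; this defective-distribution caveat is inherited from \eqref{dnf0} and does not affect the derivative formula.)
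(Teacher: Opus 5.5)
Your proof is correct and follows the same route as the paper. Like the paper, you differentiate \eqref{dnf00} by the fundamental theorem of calculus and the chain rule to get $f=\frac{1}{B(a,b)}G^{a-1}(1-G)^{b-1}g$, substitute \eqref{dnf3}--\eqref{dnf4}, and merge $e^{-A}(e^{-A})^{b-1}=e^{-bA}$. Your reading of the exponent as $-b\,A(x)$ is the right one: the paper's own computation passes through $\exp[-A(x)]^{b}$ before rewriting it in a form where $b$ appears to multiply only the first term. Your extra checks, that $0<G(x)<1$ and that the model is defective for $\tau<0$, are sound additions the paper omits.
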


\begin{proof}

By differentiating the expression \eqref{dnf00} with respect to $x$, we obtain :
En effet, on a :
\begin{eqnarray}
\label{dnf8}
f(G(x),a,b)&=& F'(G(x),a,b) \nonumber\\
&=& \left( \frac{1}{B(a,b)}\int_{0}^{G(x)} u^{a-1}(1-u)^{b-1}du\right)'  \nonumber\\
&=& \frac{1}{B(a,b)} G(x)^{a-1}[1-G(x)]^{b-1}g(x)
\end{eqnarray}
where $G(\cdot)$ and $g(\cdot)$ are given by the are given respectively by the
expressions \eqref{dnf3} and \eqref{dnf4}. \\
Thus,
\begin{eqnarray*}
f(x) &=&\frac{1}{B(a,b)} G(x)^{a-1}[1-G(x)]^{b-1}g(x)\\
&=&\frac{\gamma}{\beta B(a,b)} \dfrac{(\frac{x}{\beta})^{\gamma-1}
 \lambda^{1-\tau}}
{\left[(\frac{x}{\beta})^{\gamma} + \lambda \right]^{1-\tau}}
\exp\left[-\dfrac{\lambda^{1-\tau} \left(
(\frac{x}{\beta})^{\gamma}
+ \lambda \right)^{\tau} - \lambda }{\tau} \right]\\
&\times& \left(1- \exp\left[-\dfrac{\lambda^{1-\tau} \left(
(\frac{x}{\beta})^{\gamma} + \lambda \right)^{\tau} - \lambda
}{\tau} \right] \right)^{a-1}
\left(\exp\left[-\dfrac{\lambda^{1-\tau} \left(
(\frac{x}{\beta})^{\gamma} + \lambda \right)^{\tau} - \lambda
}{\tau} \right] \right)^{b-1}\\
&=&\frac{\gamma}{\beta B(a,b)} \dfrac{(\frac{x}{\beta})^{\gamma-1}
 \lambda^{1-\tau}}
{\left[(\frac{x}{\beta})^{\gamma} + \lambda \right]^{1-\tau}}
\exp\left[-\dfrac{\lambda^{1-\tau} \left(
(\frac{x}{\beta})^{\gamma} + \lambda \right)^{\tau} - \lambda
}{\tau} \right]^{b}\\
&\times& \left(1- \exp\left[-\dfrac{\lambda^{1-\tau} \left(
(\frac{x}{\beta})^{\gamma} + \lambda \right)^{\tau} - \lambda
}{\tau} \right] \right)^{a-1}
\\
&=&\frac{\gamma}{\beta B(a,b)} \dfrac{(\frac{x}{\beta})^{\gamma-1}
 \lambda^{1-\tau}}
{\left[(\frac{x}{\beta})^{\gamma} + \lambda \right]^{1-\tau}}
\exp\left[-\dfrac{b\lambda^{1-\tau} \left(
(\frac{x}{\beta})^{\gamma}
+ \lambda \right)^{\tau} - \lambda }{\tau} \right]\\
&\times& \left(1- \exp\left[-\dfrac{\lambda^{1-\tau} \left(
(\frac{x}{\beta})^{\gamma} + \lambda \right)^{\tau} - \lambda
}{\tau} \right] \right)^{a-1}\\
\end{eqnarray*}
\end{proof}

From the above, the following result is obtained, which gives the associated risk function:

\begin{corollary}
The hazard function associated with the 6-parameter Beta modified Weibull distribution is given by : 
{\footnotesize
\begin{eqnarray}
\label{dnf10}
 h(x;a,b,\gamma,\beta,\lambda,\tau)&=&
\dfrac{\gamma(\frac{x}{\beta})^{\gamma-1}
 \lambda^{1-\tau}}
{\beta
B(a,b)\left[1-I_{G(x;\gamma,\beta,\lambda,\tau)}(a,b)\right]\left[(\frac{x}{\beta})^{\gamma}
+ \lambda \right]^{1-\tau}} \exp\left[-\dfrac{b\lambda^{1-\tau}
\left( (\frac{x}{\beta})^{\gamma}
+ \lambda \right)^{\tau} - \lambda }{\tau} \right]\nonumber\\
&\times& \left(1- \exp\left[-\dfrac{\lambda^{1-\tau} \left(
(\frac{x}{\beta})^{\gamma} + \lambda \right)^{\tau} - \lambda
}{\tau} \right] \right)^{a-1} \nonumber\\
\end{eqnarray}
}
\end{corollary}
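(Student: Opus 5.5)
The hazard function is the ratio of the density to the survival function, $h(x)=f(x)/S(x)$, where $f$ is the density of the six-parameter Beta modified Weibull distribution and $S$ its survival function. So the plan is to combine Theorem \ref{theo1} with Theorem \ref{prop2}; no new integration is needed.

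First, rewrite the survival function \eqref{dnf7} with the incomplete beta ratio introduced in the proof of Theorem \ref{prop2}. With $G(x)=G(x;\gamma,\beta,\lambda,\tau)$ given by \eqref{dnf3} in Proposition \ref{prop1}, we have
$S(x;a,b,\gamma,\beta,\lambda,\tau)=1-\frac{1}{B(a,b)}B_{G(x)}(a,b)=1-I_{G(x)}(a,b)$.
For $0<x<\infty$ we have $0<G(x)<1$, so $I_{G(x)}(a,b)<1$ and the division below is legitimate.

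Next, take $f$ from \eqref{dnf8}, namely $f(x)=\frac{1}{B(a,b)}G(x)^{a-1}[1-G(x)]^{b-1}g(x)$, and substitute \eqref{dnf3} and \eqref{dnf4}. The constant $\frac{\gamma}{\beta}$ from $g$ is pulled into the front factor. The factor $1-G(x)=\exp[\cdots]$ from $g$ combines with $[1-G(x)]^{b-1}$ to give $[1-G(x)]^{b}$, which is the exponential factor raised to the power $b$. This reproduces \eqref{dnf9}. Dividing by $1-I_{G(x)}(a,b)$ and placing that factor in the denominator next to $\beta B(a,b)\left[(\frac{x}{\beta})^{\gamma}+\lambda\right]^{1-\tau}$ gives the displayed expression \eqref{dnf10}.

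The only delicate step is the bookkeeping of the exponent $b$. Strictly, $(e^{-A})^{b}=e^{-bA}$ with $A=\frac{\lambda^{1-\tau}((\frac{x}{\beta})^{\gamma}+\lambda)^{\tau}-\lambda}{\tau}$, so $b$ multiplies the whole numerator $\lambda^{1-\tau}(\cdots)^{\tau}-\lambda$. I would therefore read the notation $\exp[-\frac{b\lambda^{1-\tau}(\cdots)^{\tau}-\lambda}{\tau}]$ in \eqref{dnf9} and \eqref{dnf10} as $\exp[-bA]$, consistent with Theorem \ref{theo1}. With that reading, the rest is direct substitution.
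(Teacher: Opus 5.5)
Your proposal is correct and matches the paper's proof: the paper also writes $h=f/S$, substitutes \eqref{dnf9} and \eqref{dnf7}, and rewrites the integral as $I_{G(x;\gamma,\beta,\lambda,\tau)}(a,b)$. Your remark on the exponent is also right: $(e^{-A})^{b}=e^{-bA}$ forces $b$ to multiply the whole numerator $\lambda^{1-\tau}((\frac{x}{\beta})^{\gamma}+\lambda)^{\tau}-\lambda$, so the displayed formula (carried over unchanged from Theorem \ref{theo1}) must be read your way, and your check that $0<G(x)<1$, hence $S(x)>0$, fills a point the paper omits.
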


\begin{proof}
The proof of this corollary is direct. Indeed, since the
risk function is defined by $h(x)=\frac{f(x)}{S(x)}$, we have:
{\scriptsize
\begin{eqnarray*}
h(x;a,b,\gamma,\beta,\lambda,\tau)
&=& \dfrac{\frac{\gamma}{\beta B(a,b)} \dfrac{(\frac{x}{\beta})^{\gamma-1}
 \lambda^{1-\tau}}
{\left[(\frac{x}{\beta})^{\gamma} + \lambda \right]^{1-\tau}}
\exp\left[-\dfrac{b\lambda^{1-\tau} \left(
(\frac{x}{\beta})^{\gamma}
+ \lambda \right)^{\tau} - \lambda }{\tau} \right]
 \left(1- \exp\left[-\dfrac{\lambda^{1-\tau} \left(
(\frac{x}{\beta})^{\gamma} + \lambda \right)^{\tau} - \lambda
}{\tau} \right] \right)^{a-1}} 
{ 1-
\frac{1}{B(a,b)}\int_{0}^{1- \exp\left[-\dfrac{\lambda^{1-\tau}
\left( (\frac{x}{\beta})^{\gamma} + \lambda \right)^{\tau} - \lambda
}{\tau} \right]} u^{a-1}(1-u)^{b-1}du}\\ 
&=& \dfrac{\frac{\gamma}{\beta B(a,b)} \dfrac{(\frac{x}{\beta})^{\gamma-1}
 \lambda^{1-\tau}}
{\left[(\frac{x}{\beta})^{\gamma} + \lambda \right]^{1-\tau}}
\exp\left[-\dfrac{b\lambda^{1-\tau} \left(
(\frac{x}{\beta})^{\gamma}
+ \lambda \right)^{\tau} - \lambda }{\tau} \right]
 \left(1- \exp\left[-\dfrac{\lambda^{1-\tau} \left(
(\frac{x}{\beta})^{\gamma} + \lambda \right)^{\tau} - \lambda
}{\tau} \right] \right)^{a-1}} 
{ 1-
\frac{1}{B(a,b)}\int_{0}^{G(x;\gamma,\beta,\lambda,\tau)} u^{a-1}(1-u)^{b-1}du}\\ 
&=& \dfrac{\frac{\gamma}{\beta B(a,b)} \dfrac{(\frac{x}{\beta})^{\gamma-1}
 \lambda^{1-\tau}}
{\left[(\frac{x}{\beta})^{\gamma} + \lambda \right]^{1-\tau}}
\exp\left[-\dfrac{b\lambda^{1-\tau} \left(
(\frac{x}{\beta})^{\gamma}
+ \lambda \right)^{\tau} - \lambda }{\tau} \right]
 \left(1- \exp\left[-\dfrac{\lambda^{1-\tau} \left(
(\frac{x}{\beta})^{\gamma} + \lambda \right)^{\tau} - \lambda
}{\tau} \right] \right)^{a-1}} 
{ 1-I_{G(x;\gamma,\beta,\lambda,\tau)}(a,b)}\\
&=&
\dfrac{\gamma(\frac{x}{\beta})^{\gamma-1}
 \lambda^{1-\tau}}
{\beta
B(a,b)\left[1-I_{G(x;\gamma,\beta,\lambda,\tau)}(a,b)\right]\left[(\frac{x}{\beta})^{\gamma}
+ \lambda \right]^{1-\tau}} \exp\left[-\dfrac{b\lambda^{1-\tau}
\left( (\frac{x}{\beta})^{\gamma}
+ \lambda \right)^{\tau} - \lambda }{\tau} \right]\nonumber\\
&\times& \left(1- \exp\left[-\dfrac{\lambda^{1-\tau} \left(
(\frac{x}{\beta})^{\gamma} + \lambda \right)^{\tau} - \lambda
}{\tau} \right] \right)^{a-1} \\
\end{eqnarray*}
}

\end{proof}

\section{Simulation}
In this section, we plot the probability density and hazard functions according to certain values assigned to the parameters.

\subsection{Density function}

The following figures shows the graphical representation of the probability distribution function of the new distribution as well as those of its subdistributions :

\begin{figure}[H]
\centering
\begin{subfigure}[b]{0.4\linewidth}
\includegraphics[width=\linewidth]{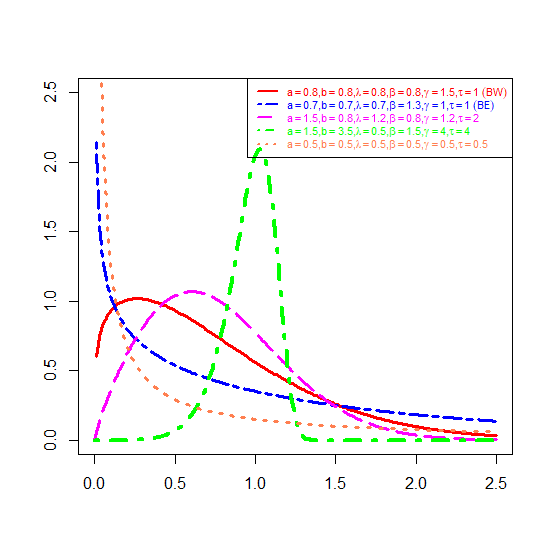}
\caption{} \label{Fig1a}
\end{subfigure}
\begin{subfigure}[b]{0.4\linewidth}
\includegraphics[width=\linewidth]{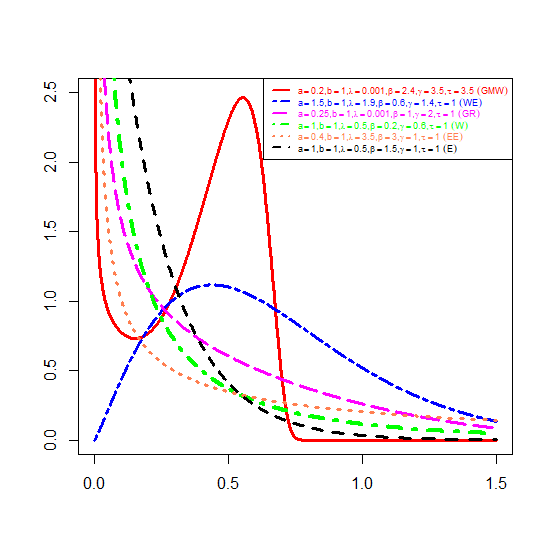}
\caption{} 
\label{Fig1b}
\end{subfigure}
\caption{Plots of the modified Beta weibull density for some parameter values.}
\label{figure1}
\end{figure}

\textbf{Interpretation} 
In figure \ref{Fig1a}, when $a=0.8, b=0.8, \lambda=0.8, \beta=0.8, \gamma=1.5, \tau=1$ we obtain the graphical representation (in red) of the density of the Beta Weibull (BW) distribution (see \cite{Famoye}).\\ 
When $a=0.7, b=0.7, \lambda=0.7, \beta=1.3, \gamma=1, \tau==1$ we obtain the
graphical representation (in blue) of the density of the exponential Beta distribution (BE) (see \cite{Nadarajah3}).\\
When $a=1.5, b=0.8, \lambda=1.2, \beta=0.8, \gamma=1.2, \tau=2$ (in purple), $a=1.5, b=3.5, \lambda=0.5, \beta=1.5, \gamma=4, \tau=4$ (in green), $a=0.5, b=0.5, \lambda=0.5, \beta=0.5, \gamma=0.5, \tau=0.5$ (in orange), we obtain the graphical representation of the density of the new six-parameter modified beta Weibull distribution.\\
In figure \ref{Fig1b}, when $a=0.2, b=1, \lambda=0.001, \beta=2.4, \gamma=3.5, \tau=3.5$ we obtain the graphical representation (in red) of the density of the Generalised Modified Weibull (GMW) distribution (see \cite{Carrasco}).\\
When $a=1.5, b=1, \lambda=1.9, \beta=0.6, \gamma=1.4, \tau==1$ we obtain the
graphical representation (in blue) of the density of the distribution
of the Weibull exponential (WE) (see \cite{Mudholkar1, Mudholkar2}).\\
When $a=0.25, b=1, \lambda=0.001, \beta=1, \gamma=2, \tau=1$ we obtain the
graphical representation (in purple) of the density of the generalized
Rayleigh (GR) distribution.\\
When $a=1, b=1, \lambda=0.5, \beta=0.2, \gamma=0.6, \tau==1$ we obtain the
graphical representation (in green) of the density of the Weibull
distribution (W) (see \cite{Lai}).\\
When $a=0.4, b=1, \lambda=3.5, \beta=3, \gamma=1, \tau==1$ We obtain the
graphical representation (in orange) of the density of the distribution
of the exponential function (EE) (see \cite{Gupta2}).\\
When $a=1, b=1, \lambda=0.5, \beta=1.5, \gamma=1, \tau=1$ we obtain the
graphical representation (in orange) of the density of the exponential
distribution (E) (see \cite{Zelen}).\\
We can therefore conclude from this figure that the new modified
six-parameter Beta Weibull distribution covers all density forms of
distributions in the literature.

\subsection{Hazard function}

The hazard function of the new 6-parameter distribution is represented
by the figures above :

\begin{figure}[H]
\centering
\begin{subfigure}[b]{0.4\linewidth}
\includegraphics[width=\linewidth]{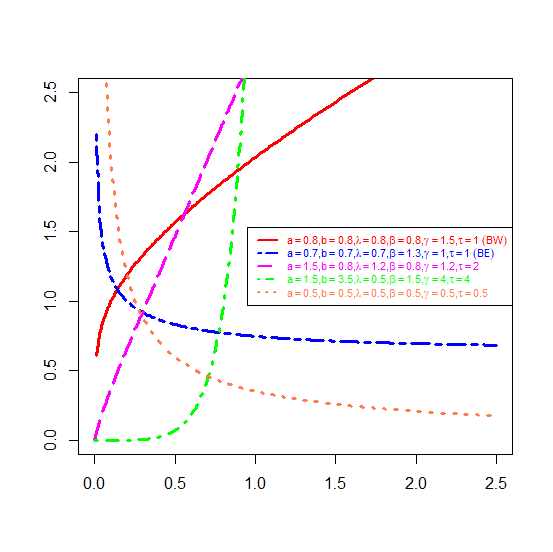}
\caption{} \label{Fig2a}
\end{subfigure}
\begin{subfigure}[b]{0.4\linewidth}
\includegraphics[width=\linewidth]{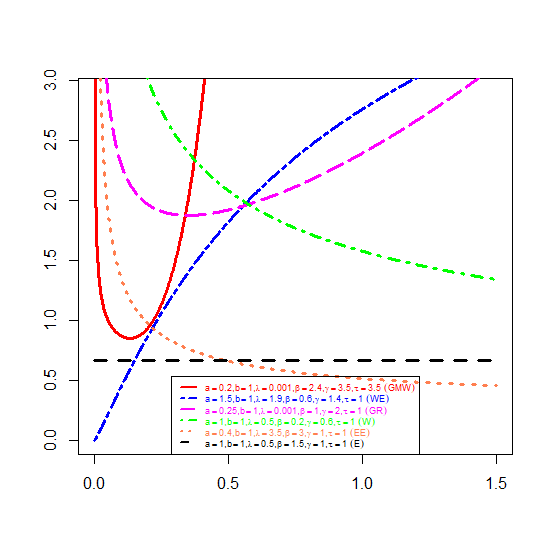}
\caption{} 
\label{Fig2b}
\end{subfigure}
\caption{Plots of the Modified Beta Weibull hazard
function for some parameter values} \label{figure2}
\end{figure}

\textbf{Interpretation} 
In figure \ref{Fig1a}, when $a=0.8, b=0.8, \lambda=0.8, \beta=0.8, \gamma=1.5, \tau=1$ we obtain the graphical representation (in red) of the hazard function of the Beta Weibull (BW) distribution (see \cite{Famoye}).\\ 
When $a=0.7, b=0.7, \lambda=0.7, \beta=1.3, \gamma=1, \tau==1$ we obtain the
graphical representation (in blue) of the hazard function of the exponential Beta distribution (BE) (see \cite{Nadarajah3}).\\
When $a=1.5, b=0.8, \lambda=1.2, \beta=0.8, \gamma=1.2, \tau=2$ (in purple), $a=1.5, b=3.5, \lambda=0.5, \beta=1.5, \gamma=4, \tau=4$ (in green), $a=0.5, b=0.5, \lambda=0.5, \beta=0.5, \gamma=0.5, \tau=0.5$ (in orange), we obtain the graphical representation of the hazard function of the new six-parameter modified beta Weibull distribution.\\
In figure \ref{Fig1b}, when $a=0.2, b=1, \lambda=0.001, \beta=2.4, \gamma=3.5, \tau=3.5$ we obtain the graphical representation (in red) of the hazard function of the Generalised Modified Weibull (GMW) distribution (see \cite{Carrasco}).\\
When $a=1.5, b=1, \lambda=1.9, \beta=0.6, \gamma=1.4, \tau==1$ we obtain the
graphical representation (in blue) of the hazard function of the distribution
of the Weibull exponential (WE) (see \cite{Mudholkar1, Mudholkar2}).\\
When $a=0.25, b=1, \lambda=0.001, \beta=1, \gamma=2, \tau=1$ we obtain the
graphical representation (in purple) of the hazard function of the generalized
Rayleigh (GR) distribution.\\
When $a=1, b=1, \lambda=0.5, \beta=0.2, \gamma=0.6, \tau==1$ we obtain the
graphical representation (in green) of the hazard function of the Weibull
distribution (W) (see \cite{Lai}).\\
When $a=0.4, b=1, \lambda=3.5, \beta=3, \gamma=1, \tau==1$ We obtain the
graphical representation (in orange) of the hazard function of the distribution
of the exponential function (EE) (see \cite{Gupta2}).\\
When $a=1, b=1, \lambda=0.5, \beta=1.5, \gamma=1, \tau=1$ we obtain the
graphical representation (in orange) of the hazard function of the exponential
distribution (E) (see \cite{Zelen}).\\

Figure \ref{figure2} above shows that all forms of hazard functions (constant,
increasing, decreasing, $\bigcup$-shaped and $\bigcap$-shaped) are represented
in the new distribution, which leads to the conclusion that
this new model studied fits almost all of the previous data and
encompasses all existing distributions.

\section{Special cases}

Here we give some specific cases of the new distribution, setting the
values of certain parameters. Indeed, by substituting $\tau=1,$ $\gamma=1$, $a=1$ et $b=1$ into the new associated functions, we observe the following :\\

When $\tau=1$, we find the Beta Weibull distribution, which we denote by BW (\cite{Wahed}).\\ 
When $ \tau=1, \gamma=1$, we find the exponential
beta distribution, which we denote by BE. (\cite{Nadarajah3}).\\ 
When $\tau=1, b=1$,we find the Weibull exponent, which we denote by EW (\cite{Mudholkar1}).\\ 
When $\tau=1, b=1, \gamma=1$, we find the exponent of the exponential function
which we denote by EE (\cite{Mudholkar1,Mudholkar}).\\ 
When $a=1, b=1$, we find the Weibull distribution with four parameters which we denote W4 (\cite{Jeong}).\\ 
When $a=1, b=1, \tau=1$, We find the Weibull distribution with two parameters which we denote W (\cite{Weibull}). 

The following table summarizes the sub-distributions obtained from the new distribution.

\begin{table}[H]
\centering
\begin{tabular}{|c|c|}
\hline \rule[-2ex]{0pt}{5.5ex} Distributions & Parameters \\
\hline \rule[-2ex]{0pt}{5.5ex} Six parameters Beta modified Weibull distribution & $a, b, \gamma, \beta, \lambda, \tau$  \\
\hline \rule[-2ex]{0pt}{5.5ex} Five parameters Beta modified Weibull distribution & $a, b, \gamma, \beta, \lambda, \tau\rightarrow \infty$  \\
\hline \rule[-2ex]{0pt}{5.5ex} Beta Weibull distribution & $a, b, \gamma, \beta, \lambda=1, \tau=1$  \\
\hline \rule[-2ex]{0pt}{5.5ex} Beta modified Rayleigh distribution & $a, b=1, \gamma=2, \beta, \lambda=1, \tau=1$ \\
\hline \rule[-2ex]{0pt}{5.5ex} Beta Rayleigh distribution & $a, b=1, \gamma=2, \beta=1, \lambda=1, \tau=1$ \\
\hline \rule[-2ex]{0pt}{5.5ex} Beta Exponential distribution & $a, b, \gamma=1, \beta, \lambda=1, \tau=1$ \\
\hline \rule[-2ex]{0pt}{5.5ex} Exponential Weibull distribution & $a, b=1, \gamma, \beta, \lambda=1, \tau=1$ \\
\hline \rule[-2ex]{0pt}{5.5ex} Exponent of the exponential distribution & $a, b=1, \gamma=1, \beta, \lambda=1, \tau=1$ \\
\hline \rule[-2ex]{0pt}{5.5ex} Four parameters Weibull distribution & $a=1, b=1, \gamma, \beta, \lambda, \tau$ \\
\hline \rule[-2ex]{0pt}{5.5ex} Three parameters Weibull distribution & $a=1, b=1, \gamma, \beta, \lambda, \tau\rightarrow 0$  \\
\hline \rule[-2ex]{0pt}{5.5ex} Modified Rayleigh distribution & $a=1, b=1, \gamma=2, \beta, \lambda=1, \tau=1$ \\
\hline \rule[-2ex]{0pt}{5.5ex} Classical Weibull distribution & $a=1, b=1, \gamma, \beta, \lambda=1, \tau=1$ \\
\hline \rule[-2ex]{0pt}{5.5ex} Rayleigh distribution & $a=1, b=1, \gamma=2, \beta=1, \lambda=1, \tau=1$ \\
\hline \rule[-2ex]{0pt}{5.5ex} Exponential distribution & $a=1, b=1, \gamma=1, \beta, \lambda=1, \tau=1$ \\
\hline
\end{tabular}
\caption{The new Beta modified Weibull distribution and its sub-distributions}
\label{tab1}
\end{table}
From the table above, we notice that by assigning the values
to the parameters of the six-parameter Beta modified Weibull distribution,
we obtain all the sub-distributions from the literature,
which allows us to conclude that the six-parameter modified Beta
Weibull distribution generalizes those from the literature.

\section{Conclusion}
\label{concl}

In this work, we have introduced a new six-parameter Beta modified
Weibull distribution generalizing the four-parameter Beta Weibull
distribution. Next, the distribution, survival, density, and hazard
functions of the new distribution were obtained. Finally, numerical
simulations of the probability density and hazard functions
were performed, highlighting two major aspects: The shapes of
the density function of the new modified six-parameter Beta modified
Weibull distribution cover all probability density forms of distributions
in the literature, and the graph of the hazard function
illustrates that all hazard function shapes (constant, increasing, decreasing,
in the form $\bigcup$ in the form $\bigcap$) are represented, meaning
that it encompasses all existing distributions. And for specific
cases, it appears that the new six-parameter Beta modified Weibull
distribution generalizes all sub-distributions in the literature.

Future work will focus on the mathematical properties of the
new six-parameter Beta modified Weibull distribution, such as linear
representation, order statistics, and moments. Parameter estimation
using the maximum likelihood method will also be studied.
The determination of asymptotic properties such as bias, variance,
the central limit theorem, and convergences (uniform and in probabilities)
will be addressed. We also plan to apply this to real-world
data to compare the efficiency and robustness of the new cumulative
distribution function.


\textbf{Author contributions }\\
All authors listed have made a substantial, and intellectual effort for this research.

\textbf{Acknowledgments  } \\

\textbf{Conflict of interest  }
The authors declare that the research was conducted in the absence of any commercial or financial relationships that could be construed as a potential conflict of interest.


\begin{thebibliography}{9}
\bibitem{Carrasco} Carrasco JMF, Ortega EMM and Cordeiro GM. (2008). A generalized modified
Weibull distribution for lifetime modeling. Computational Statistics and Data Analysis. 
\bibitem{Djongreba} Djongreba NF. (2025) Estimation problems based on dependent data. Theses, Université
de Maroua. https://hal.science/tel-05427772
\bibitem{Eugene} Eugene N, Lee C and Famoye F. (2002). Beta-normal Distribution and its applications. Com-
munication in Statistics - Theory and Methods, 31, 497-512.
\bibitem{Famoye} Famoye F, Lee C, Olumolade O. (2005) The beta Weibull distribution. Journal of
Statistical Theory and Applications; 4(2):121--36;
\bibitem{Jeong} Jeong JH (2006), A new parametric family
for modelling cumulative incidence function: application to breast
cancer data. J.R. Statistic. Soc. A, 169:289-303;
\bibitem{Gupta1} Gupta RD and Kundu D. (1999). Generalized exponential distributions. Austral. New Zeal.
J. Statist., 41, 173-188.
\bibitem{Gupta2} Gupta RD and Kundu D. (2001). Exponentiated exponential distribution: an alternative to
gamma and Weibull distributions. Biometrical Jour., 43, 117-130.
\bibitem{Khan} Khan MN (2015). The modified beta Weibull distribution, Hacettepe Journal of Mathematics and Statistics. 44(6):1553-1568.
\bibitem{Kundu} Kundu D and Rakab MZ. (2005). Generalized Rayleigh distribution: different methods of
estimation. Computational Statistics and Data Analysis, 49, 187-200.
\bibitem{Lai} Lai CD, Xie M and Murthy DNP. (2003). A modified Weibull distribution. Transactions
on Reliability, 52, 33-37.
\bibitem{Lee} Lee C, Famoye F and Olumolade O (2007) Beta-Weibull Distribution: Some Properties and Applications to Censored Data, Journal of Modern Applied Statistical Methods: Vol. 6 : Iss. 1 , Article 17.
\bibitem{Mudholkar1} Mudholkar GS and Srivastava DK. (1993). Exponentiated Weibull family for analyzing
bathtub failure-real data. IEEE Transaction on Reliability, 42,
299-302.
\bibitem{Mudholkar2} Mudholkar GS, Srivastava DK and Friemer M. (1995). The exponentiated Weibull family:
A reanalysis of the bus-motor-failure data. Technometrics, 37, 436-445.
\bibitem{Mudholkar} Mudholkar GS, Srivastava DK and Kollia GD (1996). A generalization of the Weibull distribution with application to the analysis of survival data. J. Amer. Statist. Assoc., 91, 1575-1583;
\bibitem{Nadarajah2} Nadarajah S and Kotz S. (2004), The beta Gumbel distribution, Mathematical Problems
in Engineering, 1, 323-332.
\bibitem{Nadarajah3} Nadarajah S and Kotz S. (2006). The beta exponential distribution. Reliability Engineering
and System Safety, 91, 689-697.
\bibitem{Nadarajah1} Nadarajah S, Teimouri M and Shih SH.(2014) Modified Beta Distributions. Sankhya Ser. B 76,
19-48.
\bibitem{Ngatchou} Ngatchou-Wandji, J., Ltaifa, M., Njamen Njomen, D. A., Shen, J. (2022). Nonparametric Estimation of the Density Function of the Distribution of the Noise in CHARN Models. Mathematics, 10(4), 624.
\bibitem{Njomen} Njomen DAN, Yayebga HC (2019). Density and Risk Function
of the Circular Kernel Study, European Journal of Pure and
Applied Mathematics, 12(4), 1612-1642.
\bibitem{Silva} Silva GO, Edwin MM, Ortega and Cordeiro GM (2010). The
beta modified Weibull distribution, Lifetime Data Anal. 16, 409-430;
\bibitem{Wahed} Wahed AS, Luong TM, and Jeong J-H. (2009) A new
generalization of Weibull distribution with application to a breast
cancer data set. Statistics in Medicine; 28(16): 2077-2094;
\bibitem{Weibull} Weibull WA. (1951). Statistical distribution function of wide applicability. ASME
Journal of Applied Mechanics Transactions of the American Society of
Mechanical Engineers:293--7.
\bibitem{Zelen} Zelen M and Dannemiller MC. (1961). The robustness of life testing procedures derived from the exponential distribution. Technometrics 3, 29-49. 


\end{thebibliography}
    \end{document}